\newtheorem{thm}{Theorem} 
 \newtheorem{lem}[thm]{Lemma}
 \newdefinition{rem}{Remark}
\newproof{proof}{Proof}
\newtheorem{defn}{Definition}
\newcommand{\abs}[1]{\ensuremath{\left| #1 \right| }}
\newcommand{\qPs}{$q$-Pochhammer symbol}
\newcommand{\RRis}{Rogers--Ramanujan identities}
\date{}
\journal{arXiv}
\begin{document}

\begin{frontmatter}

\title
{Combinatorial Formulas for Certain Sequences of Multiple Numbers}

\author{Hasan Coskun}
\ead{hasan.coskun@tamuc.edu}
\address{Department of Mathematics, Texas A\&M
  University--Commerce, Binnion Hall, Room 314, Commerce, TX 75429}  

\begin{abstract}
Multiple analogues of certain families of combinatorial numbers are recently constructed by the author in terms of well poised Macdonald functions, and some of their fundamental properties are developed. In this paper, we present combinatorial formulas for the well poised Macdonald functions, the multiple binomial coefficients, the multiple bracket function, and the multiple Catalan and Lah numbers. 
\end{abstract}

\begin{keyword}
multiple combinatorial numbers \sep multiple bracket function \sep multiple binomial coefficients \sep multiple Catalan numbers \sep multiple Lah numbers \sep well poised Macdonald functions 

\MSC 05A10
\sep 11B65
\sep 33D67
\end{keyword}

\end{frontmatter}

\section{Introduction}
\label{introduction}
The author has recently constructed multiple $qt$-analogues of several sequences of 
classical combinatorial numbers including the factorial function, the binomial coefficients, Stirling, Lah, Fibonacci, Bernoulli, Catalan, and Bell number sequences~\cite{CoskunDiscM}. We have also developed certain analytic and algebraic properties of these sequences including their generating functions and recurrence relations~\cite{Coskun3}. In this paper, we focus on another important aspect of these sequences, namely the combinatorial formulas that they satisfy. More specifically, we write combinatorial formulas in terms of reversed Young tableau for the well poised Macdonald functions $w_\lambda$, and the multiple analogues of the factorial function, the binomial coefficients, the Lah numbers and the Catalan numbers. 

Our multiple extensions of these sequences are written in terms of limiting cases of the well poised Macdonald functions~\cite{Coskun1}, which are equivalent to the shifted Macdonald polynomials $P^{*}_\lambda$ developed by Sahi and Knop~\cite{Sahi0, KnopSahi}. Besides studying other properties, Okounkov had developed a combinatorial formula for these polynomials~\cite{Okounkov1} which may be written as follows:
\begin{equation}
\label{combPstar}
P^*_{\lambda}(x; q, t) \\
= \sum_{T} \psi_T(q,t) \prod_{s\in\lambda} t^{1-T(s)} (x_{T(s)} - q^{a'(s)} t^{-l'(s)})
\end{equation}
where the sum is over all reversed Young tableau on $\lambda$ (that are semistandard, decreases strictly down the columns, and weakly in rows) with entries $T(s)$ for each square $s=(i,j)$ of $\lambda$ from the set $\mathbb{N}:=\{1,2, \ldots \}$. 
Here, the $\psi_T(q,t)$ is the same weight factor~(\ref{psiH}) for each tableaux $T$ used in the ordinary Macdonald polynomials~\cite{Macdonald1}, and the arm colength $a'(s)=j-1$ and leg colength $l'(s)=i-1$ are defined as usual. 

The main ingredient in his construction is the recurrence formula (branching rule) for the $P^{*}_\lambda$ functions. 
Using the recurrence relation for our $w_\lambda$ functions in a similar manner, we first write down a combinatorial formula for the well poised Macdonald functions. This allows us to construct combinatorial formulas for several sequences of numbers listed above in terms of $w_\lambda$ functions. For example, with the same notation as above, we write the formula for the bracket function as
\begin{multline*}
[z, s]_{\lambda} = 
\prod_{s\in \lambda} \dfrac{ 1 }{(1- q t^{n-1-l'(s)})  }   
\prod_{s\in \lambda} \left( 
\dfrac{1-q^{a_\lambda(s)} t^{l_\lambda(s)+1}}
{1 - q^{a'_\lambda(s)} t^{- l'_\lambda(s)+n}}  \right)  \\ \cdot
\sum_{T} \psi_T(q,t) \prod_{s\in\lambda}  t^{ -l'(s) +n -T(s)} 
(1 - s_{T(s)}  q^{z_T(s) -a'(s) } t^{l'(s) }   ) 
\end{multline*}
where 
$z, s\in\mathbb{C}^n$ are $n$-tuples of complex numbers, 
and the index partition $\lambda$ is of length at most $n$ (i.e., $\ell(\lambda)\leq n$). 
Note $[z, s, n, q, t]_\lambda = 0$ if $\ell(\lambda)>n$, and $[z, s, n, q, t]_\lambda \neq 0$ otherwise. 

\section{Background}
\label{back}

The multiple generalizations of combinatorial number sequences developed in~\cite{CoskunDiscM} are written in terms of the rational Macdonald functions. We therefore start with a quick review of these functions. 

The (basic) \qPs\ $(a;q)_\alpha$ may be defined formally for complex parameters $q, \alpha\in\mathbb{C}$ as
\begin{equation}
\label{qPochSymbol}
(a)_\alpha = (a;q)_\alpha :=\dfrac{(a;q)_\infty}{(aq^\alpha;q)_\infty}
\end{equation} 
where the infinite product $(a;q)_\infty$ is defined by $(a;q)_\infty:=\prod_{i=0}^{\infty} (1-aq^i)$. Note that when $\alpha=m$ is a positive integer, the definition reduces to the finite product $(a;q)_m= \prod_{k=0}^{m-1}(1-aq^k)$. 
For any partition $\lambda = (\lambda_1, \ldots, \lambda_n)$ and
$t\in\mathbb{C}$, define~\cite{Warnaar2}
\begin{equation}
\label{ellipticQtPocSymbol}
  (a)_\lambda=(a; q, t)_\lambda := \prod_{i=1}^{n} (at^{1-i};
  q)_{\lambda_i} .
\end{equation}
Note that when $\lambda=(\lambda_1) = \lambda_1$ is a single part
partition, then $(a; q, t)_\lambda = (a; q)_{\lambda_1} =
(a)_{\lambda_1}$. 

\subsection{Well--poised Macdonald functions}

The construction of our multiple number sequences involve the 
(basic) well--poised Macdonald functions $W_{\lambda/\mu}$
on $BC_n$~\cite{Coskun1}. 
This remarkable family of symmetric rational functions appeared  
in~\cite{CoskunG1} in the most general elliptic form.   

Let $\lambda=(\lambda_1, \ldots, 
\lambda_n)$ and $\mu=(\mu_1, \ldots, \mu_n)$ be partitions of at most
$n$ parts for a positive integer $n$ such that the
skew partition $\lambda/\mu$ is a horizontal strip; i.e. $\lambda_1
\geq \mu_1 \geq\lambda_2 \geq \mu_2 \geq \ldots \lambda_n \geq
\mu_n \geq \lambda_{n+1} = \mu_{n+1} = 0$. Following~\cite{Coskun1}, we define
\begin{multline}
\label{definitionHfactor}
H_{\lambda/\mu}(q,t,b) 
:= \prod_{1\leq i < j\leq n} 
\left\{\dfrac{(q^{\mu_i-\mu_{j-1}}t^{j-i})_{\mu_{j-1}-\lambda_j}
(q^{\lambda_i+\lambda_j}t^{3-j-i}b)_{\mu_{j-1}-\lambda_j}}
{(q^{\mu_i-\mu_{j-1}+1}t^{j-i-1})_{\mu_{j-1}-\lambda_j}(q^{\lambda_i
    +\lambda_j+1}t^{2-j-i}b)_{\mu_{j-1}-\lambda_j}}\right.\\
\left.\cdot 
\dfrac{(q^{\lambda_i-\mu_{j-1}+1}t^{j-i-1})_{\mu_{j-1}-\lambda_j}}
{(q^{\lambda_i-\mu_{j-1}}t^{j-i})_{\mu_{j-1}-\lambda_j}}\right\} 
\cdot\prod_{1\leq i <(j-1)\leq n} \!\!\!
\dfrac{(q^{\mu_i+\lambda_j+1}t^{1-j-i}b)_{\mu_{j-1}-\lambda_j}}
{(q^{\mu_i+\lambda_j}t^{2-j-i}b)_{\mu_{j-1}-\lambda_j}}
\end{multline}
and 
\begin{multline}
\label{definitionSkewW}
W_{\lambda/\mu}(x; q,t,a,b)
:= H_{\lambda/\mu}(q,p,t,b)\cdot\dfrac{(x^{-1}, ax)_\lambda
  (qbx/t, qb/(axt))_\mu}
{(x^{-1}, ax)_\mu (qbx, qb/(ax))_\lambda}\\
\cdot\prod_{i=1}^n\left\{\dfrac{(1-bt^{1-2i}q^{2\mu_i})}{(1-bt^{1-2i})}
  \dfrac{(bt^{1-2i})_{\mu_i+\lambda_{i+1}}}
{(bqt^{-2i})_{\mu_i+\lambda_{i+1}}}\cdot
t^{i(\mu_i-\lambda_{i+1})}\right\}
\end{multline}
where $q,t,x,a,b\in\mathbb{C}$.  
Note that $W_{\lambda/\mu}(x; q, t, a,b)$ vanishes unless $\lambda/\mu$ is a horizontal strip. 
The function
$W_{\lambda/\mu}(y, x_1, \ldots, 
x_\ell; q,t,a,b)$ is extended to $\ell+1$ variables $y, x_1, \ldots, x_\ell
\in\mathbb{C}$  
through the following recursion formula
\begin{multline}
\label{eqWrecurrence}
W_{\lambda/\mu}(y,x_1,x_2,\ldots,x_\ell;q, t, a, b) \\
= \sum_{\mu\prec \nu\prec \lambda} W_{\lambda/\nu}(yt^{-\ell};q, t, at^{2\ell},
bt^\ell) \, W_{\nu/\mu}(x_1,\ldots, x_\ell;q, t, a, b).
\end{multline}

\subsection{The Limiting $w_{\lambda/\mu}$ Function}

The Macdonald functions $W_\lambda$ are essentially equivalent to $BC_n$ abelian functions constructed independently in~\cite{Rains1}. 
Likewise, the limiting cases defined below are equivalent to shifted Macdonald polynomials~\cite{Sahi0, KnopSahi, Okounkov1} which are themselves extensions of the Macdonald polynomials~\cite{Macdonald1}.

The following limiting 
 the basic (the $p=0$ case of the elliptic) $W_{\lambda/\mu}$ functions
will be used in our constructions below. 
The existence of these limits can be seen from 
the definition~(\ref{definitionSkewW}),  the recursion
formula~(\ref{eqWrecurrence}) and the limit rule
\begin{equation}
\label{LimitRule}
  \lim_{a\rightarrow 0}\, a^{|\mu|} (x/a)_{\mu} 
= (-1)^{|\mu|}\, x^{|\mu|} t^{-n(\mu)} q^{n(\mu')} 
\end{equation}
where $\abs{\mu}=\sum_{i=1}^n \mu_i$ and $n(\mu) =
\sum_{i=1}^n (i-1) \mu_i$, 
and $n(\mu') =\sum_{i=1}^n \binom{\mu_i}{2}$. 
We denote $H_{\lambda/\mu}(q,t) :=H_{\lambda/\mu}(q,t,0)$, and for $x\in \mathbb{C}$ define
\begin{multline}
\label{wdefn}
w_{\lambda/\mu}(x; q,t) := \lim_{s\rightarrow \infty} \left( s^{|\lambda|-|\mu|} \lim_{a\rightarrow 0} W_{\lambda/\mu}(x; q,t, a, as) \right) \\
= (-q/x)^{-|\lambda|+|\mu|} q^{ -n(\lambda') + n(\mu') }  H_{\lambda/\mu}(q, t) \dfrac{(x^{-1})_\lambda }{(x^{-1})_\mu }  
\end{multline}
The recurrence formula for $w_{\lambda/\mu}$ 
function turns out to be
\begin{equation}
\label{wsuprec}
w_{\lambda/\mu}(y,z;q, t) \\
= \sum_{\nu\prec \lambda} t^{\ell(|\lambda|-|\nu|)}
w_{\lambda/\nu}(yt^{-\ell};q, t) \, 
w_{\nu/\mu}(z;q, t)
\end{equation}

\begin{rem}
\label{wdual}
We will need the following properties from~\cite{CoskunDiscM} in what follows.
Let $\mu$ be a partition of at most $n$ part, and $z=(x_1,\ldots, x_n)\in\mathbb{C}^n$. Then  \\

\noindent
(3)
Let $z=\bar{x} = (x, x, \ldots, x) \in\mathbb{C}^n$ for $x\in\mathbb{C}$, then we have
\begin{multline}
w_{\mu}(\bar{x} t^{\delta(n)};q,t)  
=  (-1)^{|\mu|} x^{|\mu|} t^{n(\mu)} q^{-|\mu|-n(\mu')} (x^{-1})_\mu
\! \prod_{1\leq i < j\leq n} \dfrac{(t^{j-i+1})_{\mu_i
-\mu_j} } {(t^{j-i})_{\mu_i -\mu_j} }  
\end{multline}
which, after flipping $q$ and $t$ and using the flip rule, 
\begin{equation}
\label{flip}
x^{|\mu|}   (x^{-1}, q, t)_\mu 
= (-1)^{|\mu|} q^{n(\mu')} t^{-n(\mu)} (x;q^{-1}, t^{-1})_{\mu}
\end{equation}
may be written as
\begin{equation}
\label{wsxtdelta}
w_{\mu}(\bar{x} t^{\delta(n)};q,t) 
=  q^{-|\mu|}  (x;q^{-1}, t^{-1})_{\mu}
\! \prod_{1\leq i < j\leq n} \dfrac{(t^{j-i+1})_{\mu_i
-\mu_j} } {(t^{j-i})_{\mu_i -\mu_j} } 
\end{equation}

\noindent
(4)
The vanishing property of $W_{\lambda}$ functions implies that 
\begin{equation}
\label{wvanish}
  w_{\mu}(q^\lambda t^\delta;q, t)=0
\end{equation}
when $\mu\not\subseteq \lambda$, where $\subseteq$ denotes the partial inclusion ordering.\\

\noindent
(5)
Let $\lambda$ be an $n$-part partition with $\lambda_n 
\neq 0$ and $0\leq k\leq \lambda_n$ for some integer $k$, and let 
$z=(x_1,\ldots,x_n)\in \mathbb{C}^n$. It was shown in~\cite{CoskunDiscM} that
\begin{equation}
\label{wrect}
w_{\bar k}(z;q,t) 
= q^{-nk} \prod_{i=1}^n (q^{1-k} x_i)_k 
\end{equation}
where $\bar k = (k,\ldots, k)\in \mathbb{C}^n$. \\

\noindent
(6)
With the same notation as above, we also have
\begin{equation}
\label{wnormal}
w_\mu(q^\mu t^{\delta(n)}; q, t) 
= q^{-|\mu|}\, t^{(n-1)|\mu|-2n(\mu)} \, ( qt^{n-1} )_{\mu} 
 \!\!\prod_{1\leq i < j\leq n} \!\!
\frac{(qt^{j-i-1})_{\mu_i-\mu_j} } {(qt^{j-i})_{\mu_i-\mu_j} }
\end{equation}

\end{rem}

\subsection{The Multiple $qt$-Binomial Coefficients}
\label{section1}
Recall that the multiple $qt$-binomial coefficient is defined in~\cite{CoskunDiscM} as follows. 
\begin{defn}
\label{qtbinomcoeffExt}
Let $z=(x_1,\ldots, x_n)\in\mathbb{C}^n$, and $\mu$ be a partition of at most $n$-parts. Then the multiple $qt$-binomial coefficient is defined by  
\begin{equation}
\label{qtbinom}
\binom{z}{\mu}_{\!\!\!q,t} := \dfrac{ q^{|\mu|} t^{2n(\mu)+(1-n)|\mu| } }  { (qt^{n-1} )_\mu} \prod_{1\leq i<j \leq n} \left\{\dfrac{ (qt^{j-i})_{\mu_i-\mu_j} } {(qt^{j-i-1})_{\mu_i-\mu_j}  } \right\}  w_\mu(q^z t^{\delta(n)}; q, t)
\end{equation}
where $q,t\in\mathbb{C}$. It should be noted that this definition makes sense even for $\mu\in\mathbb{C}^n$. 
\end{defn}

Many interesting properties of the multiple binomial coefficients are obtained in~\cite{CoskunDiscM}. We will need below a special case when $\mu$ is a rectangular partition, that is $\mu=\bar k$. Using~(\ref{wrect}) we get
\begin{equation}
\label{binom_rect1}
\binom{z}{\bar k}_{\!\!\!q,t} =\prod_{i=1}^n \dfrac{ (q^{1-k} q^{x_i}t^{n-i} )_k  }  { (qt^{n-i} )_{k}}  
\end{equation}
In the particular case for $k=1$, the definition reduces to
\begin{equation}
\label{binom_rect2}
\binom{z}{\bar 1}_{\!\!\!q,t} =\prod_{i=1}^n \dfrac{ ( q^{x_i}t^{n-i})_1  }  { (qt^{n-i} )_{1}}  
= \prod_{i=1}^n \dfrac{(1-q^{x_i} t^{n-i} )}{(1-qt^{n-i} )} 
\end{equation}

\subsection{The Multiple $qt$-Factorial Function}
We now recall another important extension from~\cite{CoskunDiscM} that generalizes the one dimensional $q$-brackets and $q$-factorial polynomials to the multiple case. 
\begin{defn}
Let $\mu$ be a partition of at most $n$ parts, $z=(x_1,\ldots,x_n)\in\mathbb{C}^n$ and $s\in\mathbb{C}^n$. Then 
\begin{multline}
\label{factorialfunc}
[z, s]_{\mu} = [z, s, n, q, t]_\mu \\
:= q^{|\mu|} 
\prod_{i=1}^n  \left\{ \dfrac{1}{(1-qt^{n-i} )^{\mu_i}} \right\}
\prod_{1\leq i<j \leq n} \left\{ \dfrac{ (t^{j-i})_{\mu_i-\mu_j} } {(t^{j-i+1})_{\mu_i-\mu_j}  } \right\}  \, w_\mu(s q^z t^{\delta(n)}; q, t)  
\end{multline}
is called the $qt$-factorial (bracket) function. 
Note that the definition involves a multiplicative variable $s$, and an exponential variable $z$. Depending on the application we often set $z=\bar 0$ and write $\langle s\rangle_{\mu} = [\bar 0, s]_{\mu}$, or set $s=\bar 1$ and write $[ z]_{\mu} = [z, \bar 1]_{\mu}$. Using the identity~(\ref{wrect}) in the special case when $\mu= \bar 1$, we define the $qt$-bracket as
\begin{equation}
\label{qtnumber}
[z] =  [z, \bar 1,  n, q, t]_{\bar 1} 
= \prod_{i=1}^n \dfrac{(1-q^{x_i} t^{n-i} )}{(1-qt^{n-i} )} 
\end{equation}
which is a multiple analogue of the classical $q$-number or $q$-bracket.
\end{defn}

\begin{rem} 
The $qt$-factorial function satisfies the following properties: \\

\noindent
(a) Note $[z, s, n, q, t]_\lambda = 0$ if $\ell(\lambda)>n$, and $[z, s, n, q, t]_\lambda \neq 0$ otherwise. \\

\noindent
(b) Let $z=(x,\ldots, x)=\bar x  \in \mathbb{C}^n$ for a single variable $x\in\mathbb{C}$, then the $qt$-factorial function $[\bar x ]_{\mu}$ may be written as 
\begin{equation}
\label{spec_bracket}
[\bar x]_{\mu} 
=\prod_{i=1}^n  \left\{ \dfrac{1 }{(1-qt^{n-i} )^{\mu_i}} \right\} 
   (q^x;q^{-1}, t^{-1})_{\mu}
= \prod_{i=1}^n  \left\{ \dfrac{ (q^x t^{i-1};q^{-1})_{\mu_i} }{(1-qt^{n-i} )^{\mu_i}} \right\} 
\end{equation}
This definition reduces to the classical $q$-bracket in the one variable case. \\

\noindent
(c) Note that $(x;1/q,1/t)_\mu $, with the reciprocals of $q$ and $t$, corresponds to a multiple basic $qt$-analogue of the falling factorial $x_{\underline{n}}:= x (x-1) \cdots (x-(n-1))$ as opposed to the rising factorial or the Pochhammer symbol.  \\

\noindent
(d)
Setting $z=\mu$, and substituting the evaluation~(\ref{wnormal}) in~(\ref{qtnumberShifted}) above gives
\begin{multline}
[\mu]_{\mu} 
= t^{-2n(\mu)-(1-n)|\mu| } \,
\prod_{i=1}^n  \left\{ \dfrac{ (qt^{n-i} )_{\mu_i} }{(1-qt^{n-i} )^{\mu_i}} \right\} \\
\cdot   \prod_{1\leq i<j \leq n} \left\{\dfrac{ (t^{j-i})_{\mu_i-\mu_j} } {(t^{j-i+1})_{\mu_i-\mu_j}  }     \dfrac{(qt^{j-i-1})_{\mu_i-\mu_j}  } { (qt^{j-i})_{\mu_i-\mu_j} } \right\} \hspace{20pt}
\end{multline}
Similar to the classical case, we may use the notation $\mu!= [\mu]_{\mu} $ and write
\begin{equation}
\label{qtnumberShifted3}
[z]_{\mu} 
= \mu! \, \binom{z}{\mu}_{\!\!\!q,t}  
\end{equation}
Note that in the particular case when $\mu=\bar k$ is a rectangular partition, the $\mu!$ reduces to a product of one dimensional quotients for each part.
\begin{equation}
\label{rect!}
\bar k !
= \prod_{i=1}^n  \left\{ \dfrac{ (qt^{n-i} )_{k} }{(1-qt^{n-i} )^{k}} \right\} 
\end{equation}

\end{rem}

\section{A combinatorial formula for the $w_\lambda$ functions}
\label{combMacdonald}

Recall that $\mu\preccurlyeq \lambda$ means that, for each $i\in [n]$
\[ \lambda_i \geq \mu_i, \quad\mathrm{and} \quad  \mu_i \geq \lambda_{i+1}. \]
For a tableux $T$ of at most $n$ rows (whose shape is a partition $\lambda$ with at most $n$ parts), we have
\[ \psi_T = \prod_{i=1}^n \psi_{\lambda^{(i)}/\lambda^{(i-1)} } \]
where 
$\emptyset = \lambda^{(0)} \preccurlyeq \cdots \preccurlyeq \lambda^{(n)}=\lambda$ is the decomposition sequence for $T$, and 
$\psi_{\lambda/\mu}$ is defined combinatorially as
\begin{equation*}
\psi_{\lambda/\mu}  = \prod_{s\in R_{\lambda/\mu}  - C_{\lambda/\mu} } \dfrac{b_\mu(s)}{b_\lambda(s) }
\end{equation*}
where $R_{\lambda/\mu}$ and $C_{\lambda/\mu}$ denotes the rows and columns that intersect the horizantal strip $\lambda/\mu$ in the Young diagram, and for each square $s=(i,j)\in\lambda$
\begin{equation}
\label{bfac}
b_\lambda(s) = \dfrac{1- q^{a_\lambda(s)} t^{l_\lambda(s)+1} }
{1-q^{a_\lambda(s)+1} t^{l_\lambda(s)}}
\end{equation}
Alternatively, an algebraic definition is given by   
\begin{equation}
\label{psiH}
\psi_{\lambda/\mu} = \prod_{1\leq i\leq j\leq \ell(\mu)}  
\dfrac{f(q^{\mu_i - \mu_j } t^{j-i} ) f(q^{\lambda_i - \lambda_{j+1} } t^{j-i} )}{f(q^{\lambda_i - \mu_j } t^{j-i} ) f(q^{\mu_i - \lambda_{j+1} } t^{j-i} )}
\end{equation}
where $f(a) =(at)_\infty / (aq)_\infty $. 

As pointed out in the Introduction, Okounkov gives a combinatorial formula for the interpolation Macdonald polynomials $P^*_\lambda$ in terms of the branching rule
\begin{multline}
P^*_{\lambda}(x_1,\ldots, x_n; q, t) \\
= \sum_{\mu\prec \lambda} \psi_{\lambda/\mu}(q,t)  \, t^{-|\mu|} 
\prod_{s\in \lambda/\mu} (x_1 - q^{a'(s)} t^{-l'(s)}) \, P^*_{\mu}(x_2,\ldots, x_n; q, t)
\end{multline}
where the sum is over all partitions that gives horizontal strips. Iteration of this result proves the combinatorial formula. It is clear that iterating this formula we obtain the semistandard tableaux sum formula~(\ref{combPstar}) for  $P^*_\lambda$.

We take a similar approach, and write a combinatorial formula for the $w_\lambda$ functions. First we write  combinatorial representations for certain products that occur often. 
\begin{lem}
\label{factors1}
The algebraic products on the left hand sides can be represented by combinatorial products on the right hand sides as follows: 
\begin{equation}
(a)\;  (x; q, t)_\lambda
= \prod_{s\in \lambda} (1- x q^{a'(s)}t^{-l'(s)} )   
\end{equation}
and 
\begin{equation}
(b)\; \prod_{i=1}^n (x_i t^{1-i})_{\lambda_{i}} = \prod_{s\in \lambda} (1- x_{1+l'(s)} q^{a'(s)} t^{-l'(s)} )
\end{equation}
and
\begin{equation}
(c)\;  x^{n(\lambda')} = \prod_{s\in \lambda} x^{a'(s)}
\qquad
(d)\;  x^{n(\lambda)} = \prod_{s\in \lambda} x^{l'(s)}
\qquad
(e)\;  x^{|\lambda|} = \prod_{s\in \lambda} x 
\end{equation}
and 
\begin{equation}
(f)\; \prod_{i=1}^n (1- y x^{n-i})^{\lambda_i}  = 
\prod_{s\in \lambda} (1- y x^{n-1-l'(s)})  
\end{equation}
We also have 
\begin{equation}
(g)\;  
x^{|\lambda^{(n)}|+\cdots+|\lambda^{(1)}|} = \prod_{s\in \lambda} x^{T(s)} 
\end{equation}
where $\emptyset = \lambda^{(0)} \prec \cdots \prec \lambda^{(n)}=\lambda$ is the decomposition sequence for the tableaux $T$, and $T(s)$ represents the filling in the square $s$ in $T$ as usual. 
\end{lem}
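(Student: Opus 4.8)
The plan is to read (a)--(g) as a dictionary between the multiplicative and exponential data of a partition and products (or sums) indexed by the boxes of its Young diagram, exploiting the coordinate descriptions $a'(s)=j-1$ and $l'(s)=i-1$ for a box $s=(i,j)$, together with the bookkeeping identities $\sum_{s\in\lambda}a'(s)=n(\lambda')$, $\sum_{s\in\lambda}l'(s)=n(\lambda)$, and $\sum_{s\in\lambda}1=|\lambda|$. For (a) I would start from the definition~(\ref{ellipticQtPocSymbol}), $(x;q,t)_\lambda=\prod_{i=1}^n(xt^{1-i};q)_{\lambda_i}$, expand each one-row factor as $(xt^{1-i};q)_{\lambda_i}=\prod_{k=0}^{\lambda_i-1}(1-xq^{k}t^{1-i})$, and observe that the $k$-th factor is exactly the box $s=(i,k+1)$, for which $a'(s)=k$ and $l'(s)=i-1$, so that $q^{k}t^{1-i}=q^{a'(s)}t^{-l'(s)}$. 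Collecting the factors of all rows yields the box product on the right. Part (b) is identical, except that the free parameter $x_i$ in row $i$ is recorded as $x_{1+l'(s)}$, since $l'(s)=i-1$ is constant along a row and singles out the row index.

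Parts (c)--(f) are pure counting. For (c) and (d) I would simply match exponents: $\prod_{s}x^{a'(s)}=x^{\sum_s a'(s)}=x^{n(\lambda')}$, and likewise $\sum_s l'(s)=\sum_i(i-1)\lambda_i=n(\lambda)$, while (e) is $\sum_s 1=|\lambda|$. For (f) I would group the boxes by rows: along row $i$ the exponent $n-1-l'(s)=n-i$ is constant, so the $\lambda_i$ boxes of that row contribute $(1-yx^{n-i})^{\lambda_i}$, and the product over $i$ recovers the left side.

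The one step needing genuine care is (g), and I expect it to be the main (if modest) obstacle, since it is the only identity that uses the chain structure rather than a row-by-row reindexing. Writing the decomposition $\emptyset=\lambda^{(0)}\prec\cdots\prec\lambda^{(n)}=\lambda$, each box $s$ lies in exactly one skew layer $\lambda^{(m)}/\lambda^{(m-1)}$, and with the filling produced by iterating the recurrence~(\ref{wsuprec}) (which peels off the listed variables so that the topmost layer carries the largest index) its entry is $T(s)=n+1-m$ on layer $m$. The plan is then to compute $\sum_{s\in\lambda}T(s)=\sum_{m=1}^{n}(n+1-m)\bigl(|\lambda^{(m)}|-|\lambda^{(m-1)}|\bigr)$ and to apply Abel summation (summation by parts): because the coefficient $n+1-m$ drops by exactly $1$ at each step and $|\lambda^{(0)}|=0$, the boundary terms collapse and the telescoping leaves $\sum_{m=1}^{n}|\lambda^{(m)}|$, which is the claimed exponent; raising $x$ to this power gives (g). The only point to verify carefully is that the indexing of the chain matches the paper's reversed-tableau filling, so that $T(s)=n+1-m$ rather than $T(s)=m$; getting this convention right is precisely what makes the sum come out to $|\lambda^{(n)}|+\cdots+|\lambda^{(1)}|$ and not its complement.
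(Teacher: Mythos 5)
Your proposal is correct and follows essentially the same route as the paper's proof, which is a direct box-by-box calculation (the paper writes out only (c)--(e) explicitly and asserts the rest are "verified similarly"). Your extra care on (g) -- pinning down the convention $T(s)=n+1-m$ on the layer $\lambda^{(m)}/\lambda^{(m-1)}$ and telescoping $\sum_{m}(n+1-m)\bigl(|\lambda^{(m)}|-|\lambda^{(m-1)}|\bigr)=\sum_{m=1}^{n}|\lambda^{(m)}|$ -- is a correct and welcome elaboration of a detail the paper leaves implicit.
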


\begin{proof}
Proof follows from direct calculations. For example, 
\begin{equation}
\prod_{s\in \lambda} x^{a'(s)} = 
\prod_{i=1}^n x^{0} \cdots x^{(\lambda_i - 1)} 
= \prod_{i=1}^n x^{\binom{\lambda_i}{2}}  
= x^{\sum_{i=1}^n \binom{\lambda_i}{2}}
= x^{n(\lambda')}
\end{equation}
and
\begin{equation}
\prod_{s\in \lambda} x^{l'(s)} = 
\prod_{i=1}^n \left(x^{i-1} \right)^{\lambda_i} 
= x^{\sum_{i=1}^n (i-1)\lambda_i}
= x^{n(\lambda)}
\end{equation}
and, in particular
\begin{equation}
\prod_{s\in \lambda} x = 
\prod_{i=1}^n x^{\lambda_i} 
= x^{\sum_{i=1}^n \lambda_i}
= x^{|\lambda|}
\end{equation}
Note that (a) is simply a special case of (b). Other properties can be verified similarly. 
\end{proof}

Now we are ready to write the combinatorial formula for the symmetric $w_{\lambda/\mu}$ functions. 
\begin{thm}
\label{combformw}
Let $\lambda$ and $\mu$ be partitions of at most $n$ parts. The function $w_{\lambda/\mu}(x; q,t) $ of a single variable $x\in\mathbb{C}$ may be written combinatorially as
\begin{equation}
w_{\lambda/\mu}(x; q,t) 
=  \psi_{\lambda/\mu}(q, t)
\prod_{s\in \lambda/\mu} q^{-1} t^{-l'(s)}  (1- x q^{-a'(s)}t^{l'(s)} ) 
\end{equation}
In the multivariable case for $z=(x_1, x_2, \ldots, x_n)\in\mathbb{C}^n$, we have
\begin{equation}
w_\lambda(z; q, t)  
= \sum_{T} \psi_T(q,t) \prod_{s\in\lambda} ( - x_{T(s)} q^{-1-a'(s)} + q^{-1} t^{n -l'(s) -T(s)}  ) 
\end{equation}
where the sum is over all semistandard reversed Young tableau of shape $\lambda$.
\end{thm}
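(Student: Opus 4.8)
The plan is to prove the single-variable formula first, then bootstrap the multivariable formula by iterating the recurrence~(\ref{wsuprec}), exactly paralleling Okounkov's branching-rule argument for $P^*_\lambda$. For the single-variable case, I would start from the closed form in~(\ref{wdefn}),
\[
w_{\lambda/\mu}(x; q,t) = (-q/x)^{-|\lambda|+|\mu|} q^{-n(\lambda')+n(\mu')} H_{\lambda/\mu}(q,t) \frac{(x^{-1})_\lambda}{(x^{-1})_\mu},
\]
and convert each algebraic piece into a product over the skew shape $\lambda/\mu$. The quotient $(x^{-1})_\lambda / (x^{-1})_\mu$ should telescope via Lemma~\ref{factors1}(a) into $\prod_{s\in\lambda/\mu}(1 - x^{-1} q^{a'(s)} t^{-l'(s)})$; rewriting $1 - x^{-1}q^{a'(s)}t^{-l'(s)} = -x^{-1}q^{a'(s)}t^{-l'(s)}(1 - x q^{-a'(s)} t^{l'(s)})$ peels off the factor $(1 - x q^{-a'(s)} t^{l'(s)})$ that appears in the claimed formula. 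The remaining monomial prefactors $(-q/x)^{-|\lambda|+|\mu|}$, $q^{-n(\lambda')+n(\mu')}$, together with the extracted $-x^{-1}q^{a'(s)}t^{-l'(s)}$ terms, must collapse to $\prod_{s\in\lambda/\mu} q^{-1} t^{-l'(s)}$ using parts (c), (d), (e) of Lemma~\ref{factors1} restricted to the skew shape.

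The genuinely delicate step is the identification of $H_{\lambda/\mu}(q,t)$ with the weight $\psi_{\lambda/\mu}(q,t)$. The factor $H_{\lambda/\mu}$ in~(\ref{definitionHfactor}) is written as a double product over pairs $1\le i<j\le n$ of ratios of \qPs s in the part-differences $\mu_{j-1}-\lambda_j$, whereas $\psi_{\lambda/\mu}$ is given either combinatorially by~(\ref{bfac}) as a product of $b_\mu(s)/b_\lambda(s)$ over rows-minus-columns of the strip, or algebraically by~(\ref{psiH}) in terms of $f(a)=(at)_\infty/(aq)_\infty$. I expect the cleanest route is to pass to the $b=0$ (equivalently $a\to 0$) specialization inside $H_{\lambda/\mu}$, match the surviving ratios against the algebraic form~(\ref{psiH}), and verify that the telescoping in $j$ (note the index shift $\mu_{j-1}$ versus $\mu_j$) reproduces the $f$-product structure. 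This is the main obstacle: the two expressions are indexed completely differently, and reconciling the horizontal-strip bookkeeping in~(\ref{definitionHfactor}) with the $1\le i\le j\le\ell(\mu)$ range in~(\ref{psiH}) will require careful tracking of which boundary terms cancel. I would treat this as an identity to be checked by reducing both sides to a normal form over the cells of $\lambda/\mu$, appealing to the fact that both $H_{\lambda/\mu}$ and $\psi_{\lambda/\mu}$ are already known (from~\cite{Coskun1} and~\cite{Macdonald1} respectively) to govern the same horizontal-strip branching.

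For the multivariable formula, I would induct on the number of variables using~(\ref{wsuprec}). Setting $\mu=\emptyset$ and writing $z=(y,z')$ with $z'=(x_2,\ldots,x_n)$, the recurrence expresses $w_\lambda(z;q,t)$ as a sum over $\nu\prec\lambda$ of $t^{\ell(|\lambda|-|\nu|)} w_{\lambda/\nu}(y t^{-\ell};q,t)\, w_\nu(z';q,t)$. Substituting the single-variable formula just proved for $w_{\lambda/\nu}$ and the inductive hypothesis for $w_\nu(z';q,t)$, each term contributes the factor $\psi_{\lambda/\nu}$ times a product over the strip $\lambda/\nu$; assigning the entry $T(s)=1$ to cells of $\lambda/\nu$ at the top of the recursion and shifting variable indices appropriately as one descends, the product $\psi_T=\prod_i \psi_{\lambda^{(i)}/\lambda^{(i-1)}}$ reassembles. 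The arithmetic to confirm is that the prefactor $t^{\ell(|\lambda|-|\nu|)}$ combined with the argument rescaling $y\mapsto y t^{-\ell}$ produces precisely the exponent $t^{n-l'(s)-T(s)}$ in the cell $s$; here Lemma~\ref{factors1}(g), which converts $\sum_i |\lambda^{(i)}|$ into $\prod_s t^{T(s)}$, is the tool that lets the iterated $t$-powers coalesce into the single combinatorial exponent $-T(s)$. The factor $(-x_{T(s)} q^{-1-a'(s)} + q^{-1} t^{n-l'(s)-T(s)})$ should then emerge term-by-term from $q^{-1}t^{-l'(s)}(1 - (y t^{-\ell}) q^{-a'(s)} t^{l'(s)})$ after tracking how the $t^{-\ell}$ rescaling at each level contributes the $t^{n-\cdots}$ shift, with the row-index of the filled variable recorded as $T(s)$.
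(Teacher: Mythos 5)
Your proposal is correct and follows essentially the same route as the paper: rewrite the one--variable closed form~(\ref{wdefn}) cell--by--cell over the strip (the paper does the equivalent step globally via the flip identity~(\ref{flip})), identify $H_{\lambda/\mu}(q,t)$ with $\psi_{\lambda/\mu}(q,t)$, and then iterate the recurrence~(\ref{wsuprec}) with $\mu=\emptyset$ to assemble the tableau sum. The one step you flag as delicate, $H_{\lambda/\mu}=\psi_{\lambda/\mu}$, is exactly the step the paper also leaves to ``some algebraic manipulations,'' so no substantive gap separates the two arguments.
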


\begin{proof}
First use the identity 
\begin{equation}
\label{flip}
x^{|\mu|}   (x^{-1}, q, t)_\mu 
= (-1)^{|\mu|} q^{n(\mu')} t^{-n(\mu)} (x;q^{-1}, t^{-1})_{\mu}
\end{equation}
to write the definition~(\ref{wdefn}) of $w_{\lambda/\mu}$ in the form 
\begin{equation}
w_{\lambda/\mu}(x; q,t) 
= q^{-|\lambda|+|\mu|} t^{-n(\lambda)+n(\mu)}  H_{\lambda/\mu}(q, t) \dfrac
{ (x;q^{-1}, t^{-1})_{\lambda} }
{   (x;q^{-1}, t^{-1})_{\mu} }   
\end{equation}
for $x\in\mathbb{C}$. 
It is easy to see by some algebraic manipulations that 
the $H_{\lambda/\mu}(q, t)$ factor in the definition of $w_{\lambda/\mu}$ function
\begin{multline*}
\label{psiHexp}
H_{\lambda/\mu}(q,t) 
= \prod_{1\leq i < j\leq n} 
\left\{\dfrac{(q^{\mu_i-\mu_{j-1}}t^{j-i})_{\mu_{j-1}-\lambda_j} }
{(q^{\mu_i-\mu_{j-1}+1}t^{j-i-1})_{\mu_{j-1}-\lambda_j} }
\dfrac{(q^{\lambda_i-\mu_{j-1}+1}t^{j-i-1})_{\mu_{j-1}-\lambda_j}}
{(q^{\lambda_i-\mu_{j-1}}t^{j-i})_{\mu_{j-1}-\lambda_j}}\right\} 
\end{multline*}
is the same as the $\psi_{\lambda/\mu}$ factor ~(\ref{psiH}) above. That is, $H_{\lambda/\mu}(q, t)=\psi_{\lambda/\mu}(q, t)$. We now use Lemma~\ref{factors1} to write 
\begin{equation}
\label{onevar}
w_{\lambda/\mu}(x; q,t) 
=  \psi_{\lambda/\mu}(q, t)
\prod_{s\in \lambda/\mu} q^{-1} t^{-l'(s)}  (1- x q^{-a'(s)}t^{l'(s)} ) 
\end{equation}
which gives the first part of the Theorem. 

The recurrence formula for $w_{\lambda/\mu}$ function~(\ref{wsuprec}) maybe written as 
\begin{multline}
w_{\lambda/\mu}(x_1,x_2,\ldots, x_n;q, t) \\
= \sum_{\mu\prec \nu\prec \lambda} t^{(n-1) (|\lambda|-|\nu|)}
w_{\lambda/\nu}(x_1 t^{1-n};q, t) \, 
w_{\nu/\mu}(x_2,\ldots, x_n;q, t)
\end{multline}
Set $\mu=0$ and substitute the formula~(\ref{onevar}) into this recurrence to write 
\begin{multline}
w_{\lambda}(x_1,x_2,\ldots, x_{n};q, t) 
= \sum_{\nu\prec \lambda} \psi_{\lambda/\nu}(q, t)  
\prod_{s\in \lambda/\nu} (-q^{-1} )  (x_1 q^{-a'(s)} - t^{n-1-l'(s)} ) \\ \cdot 
w_{\nu}(x_2,\ldots, x_{\ell+1};q, t)
\end{multline}
Applying this argument to the $w_{\nu}$ function inside the sum on the right hand side repeatedly until all variables are decomposed, and simplifying gives the second part of the Theorem. 
\end{proof}

\section{Combinatorial formulas for multiple combinatorial numbers}
\label{combMacdonald}

We need combinatorial representations of certain double products that turn out to be special evaluations of the ordinary Macdonald polynomials in the following. The next lemma carries out these calculations. 

\begin{lem}
\label{factors2}
The double product factors on the left may be written combinatorially as 
\begin{equation}
(a) \; \prod_{1\leq i<j\leq n} 
\dfrac{(qt^{j-i})_{\lambda_i-\lambda_j}}
{(qt^{j-i-1})_{\lambda_i-\lambda_j} } 
= \prod_{s\in \lambda}  
\left( \dfrac{1 - q^{a'_\lambda(s)+1} t^{-l'_\lambda(s)+n-1}  }
{1-q^{a_\lambda(s)+1} t^{l_\lambda(s)}} \right)
\end{equation}
and
\begin{equation}
(b) \hspace{30pt} \prod_{1\leq i<j \leq n} 
\dfrac{ (t^{j-i})_{\lambda_i-\lambda_j} } 
{(t^{j-i+1})_{\lambda_i-\lambda_j}  } 
= \prod_{s\in \lambda} \left( 
\dfrac{1-q^{a_\lambda(s)} t^{l_\lambda(s)+1}}
{1 - q^{a'_\lambda(s)} t^{- l'_\lambda(s)+n}}  \right)
\end{equation}
\end{lem}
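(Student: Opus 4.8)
The plan is to prove both identities by the same strategy: recognize each double product over pairs $1 \le i < j \le n$ as a ratio of products indexed by the cells of $\lambda$, using the standard conversion between products of $q$-Pochhammer symbols in the parts $\lambda_i$ and products over arm/leg statistics. The key observation is that the left-hand sides are exactly the types of double products that appear in the principal specialization (or the hook-content style evaluation) of Macdonald polynomials, which is why the statement advertises them as special evaluations. So I would first identify the relevant specialization, then translate the pairwise product into a cell product via the arm and leg lengths.

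Concretely, for part $(b)$ I would start from the right-hand side and group the cells $s = (i,j) \in \lambda$ according to their row $i$. For a fixed row, as $j$ ranges over $1, \ldots, \lambda_i$, the arm length $a_\lambda(s)$, leg length $l_\lambda(s)$, arm colength $a'_\lambda(s) = j-1$, and leg colength $l'_\lambda(s) = i-1$ all take controlled values, and the products $\prod_s (1 - q^{a_\lambda(s)} t^{l_\lambda(s)+1})$ and $\prod_s(1 - q^{a'_\lambda(s)} t^{-l'_\lambda(s)+n})$ telescope into products of $q$-Pochhammer symbols whose bases and lengths depend on how many cells in column $j$ have leg length exceeding a given value. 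The standard bookkeeping is that $l_\lambda(i,j) = \#\{k > i : \lambda_k \ge j\}$ and $a_\lambda(i,j) = \lambda_i - j$, and collecting these over all cells regroups the single cell products into the pairwise differences $\lambda_i - \lambda_j$ that appear in the $(t^{j-i})_{\lambda_i-\lambda_j}$ factors. I would carry out this regrouping for numerator and denominator separately and check that the exponents of $q$ and $t$ match on both sides.

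For part $(a)$ I would use the identical mechanism: the factor $1 - q^{a_\lambda(s)+1} t^{l_\lambda(s)}$ in the denominator and $1 - q^{a'_\lambda(s)+1} t^{-l'_\lambda(s)+n-1}$ in the numerator again telescope, this time producing the shifted bases $q t^{j-i}$ and $q t^{j-i-1}$. Alternatively, since part $(a)$ and part $(b)$ have the same structural form with $q$-shifted arguments, I expect that $(a)$ follows from $(b)$ by replacing the base of each factor, i.e. by sending the ``$t^{l+1}$'' type terms to ``$q\,t^{l}$'' type terms throughout; I would verify that this substitution is consistent with the arm/leg conversions rather than redoing the full computation.

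The main obstacle will be the careful accounting of the leg statistics, since $l_\lambda(s)$ and $l'_\lambda(s)$ depend on the global shape of $\lambda$ (not just the row), and the telescoping across columns is where off-by-one errors creep in — particularly tracking the upper limit of the inner product and confirming that cells with $\lambda_i = \lambda_j$ contribute trivially so that the pairwise product can legitimately be taken over all $i < j$ with the correct Pochhammer length $\lambda_i - \lambda_j$. The cleanest route is probably to fix a column index and sum over rows, matching the factor-by-factor contribution to a single $(u\, t^{j-i})_{\lambda_i - \lambda_j}$ symbol, and I would present the argument as ``direct calculation'' in the manner of Lemma~\ref{factors1} once the column regrouping is set up.
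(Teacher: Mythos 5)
Your plan is correct in substance but takes a genuinely different route from the paper. The paper settles both parts in a few lines by invoking Macdonald's principal specialization homomorphism $\varepsilon_{u,t}$: for (a) it applies $\varepsilon_{qt^{n-1},t}$ to the identity $Q_\lambda = b_\lambda P_\lambda$, and for (b) it applies $\varepsilon_{t^{n},t}$ to $P_\lambda$, using formula (6.17) of Macdonald's book, which already records the specialization both as a pairwise product over $i<j$ and as a cell product over $s\in\lambda$; the factor $b_\lambda(q,t)=\prod_{s\in\lambda}b_\lambda(s)$ then cancels the hook denominators and leaves exactly the stated right-hand sides. You recognize the specialization context but propose instead to verify the pairwise-to-cell conversion directly, by regrouping cells and telescoping the arm and leg statistics; this amounts to a from-scratch proof of the equivalence of the two standard product forms of the principal specialization, so it is valid and more self-contained, at the price of precisely the bookkeeping you flag as dangerous. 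Two cautions if you carry it out: the factor $\prod_{s\in\lambda}\bigl(1-q^{a_\lambda(s)}t^{l_\lambda(s)+1}\bigr)$ does not telescope within a row, because $l_\lambda(i,j)$ jumps as $j$ crosses the part sizes $\lambda_k$ with $k>i$, so the column-wise regrouping you mention at the end is the one to use; and your proposed derivation of (a) from (b) by ``replacing the base'' is not a literal substitution --- the precise relation between the two parts is multiplication by $b_\lambda$ (the $P$ versus $Q$ normalization) combined with the shift $u=t^{n}\mapsto qt^{n-1}$, so that step should either be made precise in those terms or part (a) proven independently.
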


\begin{proof}
We use the algebra homomorphism 
$\varepsilon_{u,t}$ of Macdonald, and 
the identity (6.17) in his book~\cite{Macdonald1} for the ordinary Macdonald polynomials $P_\lambda(q,t)$ and its dual $Q_\lambda(q,t)$. Applying $\varepsilon_{qt^{n-1},t}$ to both sides of the identity 
\begin{equation}
Q_\lambda(q,t) = b_\lambda(q,t) P_\lambda(q,t)
\end{equation}
we get
\begin{multline}
\prod_{1\leq i<j\leq n} 
\dfrac{(qt^{j-i})_{\lambda_i-\lambda_j}}
{(qt^{j-i-1})_{\lambda_i-\lambda_j} } 
= b_\lambda(q,t)  
\prod_{s\in \lambda} 
\dfrac{1 - q^{a'_\lambda(s)+1} t^{-l'_\lambda(s)+n-1}  }
{1-q^{a_\lambda(s)} t^{l_\lambda(s)+1}} \\
= \prod_{s\in \lambda}  
\dfrac{1- q^{a_\lambda(s)} t^{l_\lambda(s)+1} }
{1-q^{a_\lambda(s)+1} t^{l_\lambda(s)}} 
\prod_{s\in \lambda} 
\dfrac{1 - q^{a'_\lambda(s)+1} t^{-l'_\lambda(s)+n-1}  }
{1-q^{a_\lambda(s)} t^{l_\lambda(s)+1}}
\end{multline}
since 
\begin{equation*}
b_\lambda(q, t) = \prod_{s\in \lambda} b_\lambda(s,q, t) 
\end{equation*}
where $b_\lambda(s,q, t)$ is given in~(\ref{bfac}). 
We cancel the like factors to complete the first part of the Theorem. 

The second part of the Theorem is similar. 
It suffices to apply $\varepsilon_{t^{n},t}$ to $P_\lambda(q,t) $ and use the identity (6.17) from~\cite{Macdonald1} as above. 
\end{proof}

We are now ready to write a combinatorial formula for the the multiple factorial function $[z, s]_{\lambda}$.
\begin{thm}
\label{combBracket}
Let $z, s\in\mathbb{C}^n$ be $n$-tuples of complex numbers,  and $\lambda$ be a partition of length at most $n$. Then we have
\begin{multline}
[z, s]_{\lambda} = 
\prod_{s\in \lambda} \dfrac{ 1 }{(1- q t^{n-1-l'(s)})  }   
\prod_{s\in \lambda} \left( 
\dfrac{1-q^{a_\lambda(s)} t^{l_\lambda(s)+1}}
{1 - q^{a'_\lambda(s)} t^{- l'_\lambda(s)+n}}  \right)  \\
\sum_{T} \psi_T(q,t) \prod_{s\in\lambda}  t^{ -l'(s) +n -T(s)} 
(1 - s_{T(s)}  q^{z_T(s) -a'(s) } t^{l'(s) }   ) 
\end{multline}
\end{thm}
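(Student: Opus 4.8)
The plan is to start from the definition~(\ref{factorialfunc}) of the factorial function,
\[
[z, s]_{\lambda} = q^{|\lambda|}
\prod_{i=1}^n \frac{1}{(1-qt^{n-i})^{\lambda_i}}
\prod_{1\leq i<j\leq n} \frac{(t^{j-i})_{\lambda_i-\lambda_j}}{(t^{j-i+1})_{\lambda_i-\lambda_j}}\,
w_\lambda(s\,q^z t^{\delta(n)}; q, t),
\]
and to convert each of the three scalar/product factors into a product over the cells $s\in\lambda$ so that everything lives ``inside'' a single product and can be matched against the combinatorial (tableau) expansion of $w_\lambda$. The two double products and the $q^{|\lambda|}$ prefactor are already handled by earlier results: part~(e) of Lemma~\ref{factors1} rewrites $q^{|\lambda|}=\prod_{s\in\lambda} q$, part~(f) rewrites $\prod_{i=1}^n (1-qt^{n-i})^{\lambda_i}=\prod_{s\in\lambda}(1-qt^{n-1-l'(s)})$, and part~(b) of Lemma~\ref{factors2} rewrites the remaining double product as $\prod_{s\in\lambda}\bigl(1-q^{a_\lambda(s)}t^{l_\lambda(s)+1}\bigr)/\bigl(1-q^{a'_\lambda(s)}t^{-l'_\lambda(s)+n}\bigr)$. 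So the first half of the asserted formula is essentially bookkeeping from results already proved.

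The substantive step is the evaluation of $w_\lambda(s\,q^z t^{\delta(n)}; q, t)$ via the multivariable combinatorial formula from Theorem~\ref{combformw}. That theorem gives
\[
w_\lambda(y; q, t) = \sum_{T} \psi_T(q,t) \prod_{s\in\lambda}\bigl(-y_{T(s)} q^{-1-a'(s)} + q^{-1} t^{n-l'(s)-T(s)}\bigr),
\]
so I would set the $T(s)$-th coordinate of the argument to be $y_{T(s)} = s_{T(s)}\, q^{z_{T(s)}} t^{\,n-T(s)}$ (reading off the $\delta(n)$ shift as $t^{n-i}$ in the $i$-th slot, evaluated at $i=T(s)$). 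Substituting this into the bracketed factor, the plan is to factor out a common $q^{-1}t^{\,n-l'(s)-T(s)}$ from each cell; the first term then contributes $-s_{T(s)} q^{z_{T(s)}-a'(s)} t^{\,l'(s)}$ and the second contributes $+1$, yielding precisely the per-cell factor $t^{-l'(s)+n-T(s)}\bigl(1 - s_{T(s)} q^{z_{T(s)}-a'(s)} t^{l'(s)}\bigr)$ after I account for one factor of $q$ per cell being absorbed into the $q^{|\lambda|}=\prod_{s\in\lambda} q$ from the prefactor. This cancellation of the $q^{-1}$ against the $q$ is the one place where the prefactor and the $w_\lambda$-expansion genuinely interact, and it must be tracked cell-by-cell rather than globally.

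The main obstacle I anticipate is the careful matching of the exponent shift coming from the argument $s\,q^z t^{\delta(n)}$: I must verify that, under the tableau sum, indexing the coordinate by $T(s)$ correctly pairs the $i$-th coordinate's $t^{n-i}$ with the cell whose entry is $T(s)=i$, so that the exponent $n-T(s)$ appears rather than $n-i$ for some fixed $i$. This is a notational hazard because $w_\lambda$ is symmetric and the $\delta(n)$ shift breaks the symmetry only through the tableau entries; I would confirm it by checking the single-variable/rectangular case $\lambda=\bar 1$ against the known bracket~(\ref{qtnumber}) as a consistency test. Once the per-cell factors are verified to agree and the $q$-powers are reconciled, collecting the three $\prod_{s\in\lambda}$ factors and the tableau sum into a single expression completes the proof.
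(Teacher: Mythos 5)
Your proposal is correct and follows essentially the same route as the paper: both start from the definition~(\ref{factorialfunc}), convert the prefactors cell-by-cell via Lemma~\ref{factors1}(e),(f) and Lemma~\ref{factors2}(b), substitute the tableau expansion of Theorem~\ref{combformw} at the argument $s\,q^z t^{\delta(n)}$, and cancel the per-cell $q$ against the $q^{-1}$ in each tableau factor. Your explicit verification of the per-cell factorization $q^{-1}t^{\,n-l'(s)-T(s)}\bigl(1-s_{T(s)}q^{z_{T(s)}-a'(s)}t^{l'(s)}\bigr)$ is exactly the ``manipulating factors and simplifying'' step the paper leaves implicit.
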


\begin{proof}
Using the definition of the multiple factorial~(\ref{factorialfunc}), and Lemma~\ref{factors1}, Lemma~\ref{factors2}, and Theorem~\ref{combformw}, we write 
\begin{multline}
[z, s]_{\lambda} 
= \prod_{s\in \lambda} \dfrac{q}{(1- q t^{n-1-l'(s)})  }   
\prod_{s\in \lambda} \left( 
\dfrac{1-q^{a_\lambda(s)} t^{l_\lambda(s)+1}}
{1 - q^{a'_\lambda(s)} t^{- l'_\lambda(s)+n}}  \right)  \\ \cdot
\sum_{T} \psi_T(q,t) \prod_{s\in\lambda} (-q^{-1}) 
(s_{T(s)}  q^{z_T(s) -a'(s) } t^{n-T(s)}  -  t^{ -l'(s) +n -T(s)}   ) 
\end{multline}
Manipulating factors and simplifying yields the desired formula. 
\end{proof}

Next we write a combinatorial formula for the multiple $qt$-binomial coefficients in terms of Young tableau as follows. 
\begin{thm}
\label{combBinomial}
With the notation as above, we have 
\begin{multline}
\binom{z}{\mu}_{\!\!\!q,t} 
= \prod_{s\in \mu} \dfrac{ 1}{ (1-q^{a_\mu(s)+1} t^{l_\mu(s)} ) } \\  \cdot 
\sum_{T} \psi_T(q,t) \prod_{s\in\mu} t^{l'(s)+1 -T(s)}
(1- q^{z_{T(s)} -a'(s) } t^{l'(s) } ) 
\end{multline}
where $z\in\mathbb{C}^n$, and $\mu$ is a partition of at most $n$ parts. 
\end{thm}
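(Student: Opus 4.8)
The plan is to start from the definition~(\ref{qtbinom}) of the multiple $qt$-binomial coefficient and feed in the combinatorial formula for $w_\mu$ established in Theorem~\ref{combformw}, exactly as was done for the factorial function in Theorem~\ref{combBracket}. Concretely, I would specialize the multivariable formula of Theorem~\ref{combformw} to the argument $q^z t^{\delta(n)}$, i.e. set $x_i = q^{z_i} t^{n-i}$, so that each tableau factor becomes $-q^{z_{T(s)}}t^{n-T(s)}q^{-1-a'(s)} + q^{-1}t^{n-l'(s)-T(s)}$. Pulling out $q^{-1}t^{\,n-T(s)-l'(s)}$ rewrites this factor as $q^{-1}t^{\,n-T(s)-l'(s)}\,(1-q^{z_{T(s)}-a'(s)}t^{l'(s)})$, which already exhibits the linear factor appearing on the right-hand side of the claimed identity.

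Next I would dispose of the prefactors in~(\ref{qtbinom}) one at a time. The scalar $q^{|\mu|}$ cancels against the product of the $q^{-1}$'s coming from the tableau factors, using Lemma~\ref{factors1}(e). For the double product $\prod_{1\le i<j\le n}(qt^{j-i})_{\mu_i-\mu_j}/(qt^{j-i-1})_{\mu_i-\mu_j}$ I would invoke Lemma~\ref{factors2}(a), which converts it into $\prod_{s\in\mu}(1-q^{a'_\mu(s)+1}t^{-l'_\mu(s)+n-1})/(1-q^{a_\mu(s)+1}t^{l_\mu(s)})$. The denominator here is precisely the denominator demanded by the theorem, so the key remaining task is to kill the extra numerator $\prod_{s\in\mu}(1-q^{a'_\mu(s)+1}t^{\,n-1-l'_\mu(s)})$.

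The cleanest step is the cancellation of this numerator against the denominator $(qt^{n-1})_\mu$ of~(\ref{qtbinom}): expanding $(qt^{n-1};q,t)_\mu = \prod_{i=1}^n (qt^{n-i})_{\mu_i}$ by Lemma~\ref{factors1}(a) gives exactly $\prod_{s\in\mu}(1-q^{a'(s)+1}t^{\,n-1-l'(s)})$, so the two coincide and cancel term-by-term. What is then left to check is purely an exponent count for the remaining powers of $t$: combining the global prefactor $t^{2n(\mu)+(1-n)|\mu|}$ with the $\prod_{s\in\mu}t^{\,n-T(s)-l'(s)}$ coming from the tableau factors and collapsing $\sum_{s}l'(s)=n(\mu)$, $\sum_s 1 = |\mu|$, and $\sum_s T(s)$ via Lemma~\ref{factors1}(d),(e),(g), I would verify that the total $t$-exponent equals $\sum_{s\in\mu}(l'(s)+1-T(s))$, which is exactly the exponent the theorem distributes across the tableau product.

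I expect the only real obstacle to be this last bookkeeping: the theorem packages all powers of $t$ inside the per-square product $\prod_{s\in\mu}t^{\,l'(s)+1-T(s)}$, whereas the computation naturally produces them split between a global prefactor and a per-square product. Since the linear factors are already identical and the factorizations are term-by-term, matching the two reduces solely to the single exponent identity $2n(\mu)+(1-n)|\mu|+\sum_s(n-T(s)-l'(s)) = \sum_s(l'(s)+1-T(s))$, both sides collapsing to $n(\mu)+|\mu|-\sum_s T(s)$; once this is confirmed, regrouping the surviving factors completes the proof.
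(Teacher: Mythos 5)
Your proposal is correct and follows essentially the same route as the paper's own proof: substitute $q^z t^{\delta(n)}$ into Theorem~\ref{combformw}, convert the prefactors of Definition~\ref{qtbinomcoeffExt} via Lemma~\ref{factors1} and Lemma~\ref{factors2}(a), cancel $(qt^{n-1})_\mu$ against the numerator $\prod_{s\in\mu}(1-q^{a'(s)+1}t^{n-1-l'(s)})$, and balance the remaining powers of $q$ and $t$. Your exponent check $2n(\mu)+(1-n)|\mu|+\sum_s(n-T(s)-l'(s))=\sum_s(l'(s)+1-T(s))$ is exactly the simplification the paper leaves implicit, so nothing is missing.
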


\begin{proof}
We substitute $q^z t^{\delta(n)}$ for $z$ in Theorem~\ref{combformw}, and use the definition of the multiple binomial coefficients~(\ref{qtbinomcoeffExt}) together with Lemma~\ref{factors1} and Lemma~\ref{factors2} to write 
\begin{multline}
\binom{z}{\mu}_{\!\!\!q,t} 
= \prod_{s\in \lambda} \dfrac{q t^{2l'(s)+1-n}}{ (1-  q^{1+a'(s)} t^{n-1-l'(s)} ) }   
\prod_{s\in \lambda}  
\left( \dfrac{1 - q^{a'_\lambda(s)+1} t^{-l'_\lambda(s)+n-1}  }
{1-q^{a_\lambda(s)+1} t^{l_\lambda(s)}} \right) \\ \cdot
\sum_{T} \psi_T(q,t) \prod_{s\in\lambda} (-q^{-1}) 
(q^{z_T(s) -a'(s) } t^{n-T(s)}  -  t^{ -l'(s) +n -T(s)}   ) 
\end{multline}
We now cancel common factors and simplify to obtain the formula above. 
\end{proof}

We study combinatorial formulas for two more sequences of combinatorial numbers, namely the multiple Catalan numbers, and the multiple Lah numbers. 

The classical Catalan numbers are defined by the relation
\begin{equation}
C_n:=\dfrac{1}{n+1} \binom{2n}{n}
\label{eq:Catalan}
\end{equation}
A multiple $qt$-analogue of these numbers is defined in terms of the multiple binomial coefficients and the factorial functions as follows (see~\cite{CoskunDiscM}). 
\begin{defn}
Let $\lambda$ be an $n$-part partition. Then the multiple $qt$-Catalan number $C_\lambda$ is defined by 
\begin{equation}
\label{defnCat}
C_\lambda:=\dfrac{1}{[\lambda+1]_{q,t}} \binom{2 \lambda}{\lambda}_{\!\!\!q,t} 
\end{equation}
where $2\lambda=(2\lambda_1, \ldots, 2\lambda_n)$ and $\lambda+1=(\lambda_1+1, \ldots, \lambda_n+1)$.
\end{defn}
Therefore, using the identity~(\ref{binom_rect2}), the definition may be written as
\begin{equation}
C_\lambda:=\prod_{i=1}^n \dfrac{(1-qt^{n-i} )} {(1-q^{\lambda_i+1} t^{n-i} )}
 \binom{2 \lambda}{\lambda}_{\!\!\!q,t} 
\end{equation}
In the special case when $\lambda=\bar k=(k,\ldots,k)$ is a rectangular $n$-part partition, we get a simple product representation 
\begin{equation}
C_{\bar k}:= \prod_{i=1}^n \dfrac{(1-qt^{n-i} )} {(1-q^{k+1} t^{n-i} )}
\prod_{i=1}^n \dfrac{(q^{1+k} t^{n-i})_{k} }  { (qt^{n-i} )_{k} }  
=\prod_{i=1}^n \dfrac{(q^{2+k} t^{n-i})_{k-1} }  { (q^2t^{n-i} )_{k-1} }  
\end{equation}
using the evaluation~(\ref{binom_rect1}) above. In the particular case when $k=1$, 
we get $C_{\bar 1}=1$ in any dimension $n$. 

We like to point out that our multiple $qt$-Catalan numbers appears to be different from the $qt$-Catalan numbers defined in~\cite{GarsiaH1} and developed by~\cite{GarsiaHa1, Haiman1} 
and others. These numbers are one dimensional (i.e., is not a multiple sequence) in the sense that they are indexed by positive integers (the weight of the partitions) as seen in the definition 
\begin{equation}
C_n(q,t) = \sum_{\mu \vdash n} \dfrac{t^{2\sum l} q^{2\sum a}(1-t)(1-q) \prod_{}^{0,0} (1-q^{a'} t^{l'}) \sum q^{a'}t^{l'}}{\prod (q^{a} - t^{l +1})(t^{l} -q^{a+1})}
\end{equation}
where $\prod_{}^{0,0}$ means the product skips the corner cell. 
To avoid any possible confusion, we call our family of numbers above multiple $qt$-Catalan numbers. 
The multiple $qt$-Catalan numbers reduce, for $n=1$, to the one-dimensional $q$-Catalan numbers that are defined in~\cite{Carlitz3} and studied by~\cite{Andrews3, Andrews4, Stembridge1, FurlingerH1} and others. 

We next write a combinatorial formula for the multiple Catalan numbers.

\begin{thm}
Let $\lambda$ be an $n$-part partition, that is $\ell(\lambda)=n$. With the notation as above, we have 
\begin{multline}
C_\lambda 
= \prod_{s\in \bar 1 } \dfrac{ (1- q^{1+a'(s)} t^{n-1-l'(s)} ) }
{ (1- q^{1+\lambda_{1+l'(s)} } q^{a'(s)} t^{n-1-l'(s)} ) } 
\prod_{s\in \lambda} \dfrac{ 1}{ (1-q^{a_\lambda(s)+1} t^{l_\lambda(s)} ) } \\  \cdot 
\sum_{T} \psi_T(q,t) \prod_{s\in\lambda} t^{l'(s)+1 -T(s)}
(1- q^{2\lambda_{T(s)} -a'(s) } t^{l'(s) } ) 
\end{multline}
\end{thm}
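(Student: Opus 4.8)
The plan is to combine the definition~(\ref{defnCat}) of the multiple Catalan number with the combinatorial formula for the multiple binomial coefficient established in Theorem~\ref{combBinomial}. Recall that the definition, after applying the identity~(\ref{binom_rect2}) for the rectangular bracket $[\lambda+1]$, reads
\begin{equation*}
C_\lambda = \prod_{i=1}^n \frac{(1-qt^{n-i})}{(1-q^{\lambda_i+1}t^{n-i})} \binom{2\lambda}{\lambda}_{\!\!\!q,t}.
\end{equation*}
So the first step is simply to specialize Theorem~\ref{combBinomial} to the case where the exponential argument $z$ is the partition $2\lambda=(2\lambda_1,\ldots,2\lambda_n)$ and the index partition $\mu$ is $\lambda$ itself. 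This immediately produces the two product factors $\prod_{s\in\lambda}1/(1-q^{a_\lambda(s)+1}t^{l_\lambda(s)})$ and the tableaux sum $\sum_T \psi_T(q,t)\prod_{s\in\lambda}t^{l'(s)+1-T(s)}(1-q^{2\lambda_{T(s)}-a'(s)}t^{l'(s)})$ seen on the right hand side of the claimed identity, where the substitution $z_{T(s)}\mapsto 2\lambda_{T(s)}$ accounts for the doubling in the superscript of $q$.

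Next I would dispose of the scalar prefactor $\prod_{i=1}^n (1-qt^{n-i})/(1-q^{\lambda_i+1}t^{n-i})$ by rewriting it as a combinatorial product over the single-column (more precisely single-box-per-row) shape $\bar 1=(1,\ldots,1)$. The key observation is that for $s\in\bar 1$ the cell in row $i$ has $l'(s)=i-1$ and $a'(s)=0$, so that $t^{n-i}=t^{n-1-l'(s)}$ and the index $i$ on $\lambda_i$ becomes $1+l'(s)$ via $i=1+l'(s)$. Under these substitutions the numerator factor $(1-qt^{n-i})$ becomes $(1-q^{1+a'(s)}t^{n-1-l'(s)})$ and the denominator factor $(1-q^{\lambda_i+1}t^{n-i})$ becomes $(1-q^{1+\lambda_{1+l'(s)}}q^{a'(s)}t^{n-1-l'(s)})$, which is exactly the first product $\prod_{s\in\bar 1}$ in the stated theorem. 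This is a straightforward reindexing entirely analogous to parts~(e) and~(f) of Lemma~\ref{factors1}, and I would verify it by the same direct row-by-row expansion.

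Assembling these two pieces gives the claimed formula, and the final step is simply to confirm that no stray powers of $q$ or $t$ remain unaccounted for. I expect the main obstacle to be purely bookkeeping: tracking the doubling $z\mapsto 2\lambda$ through the exponent $q^{z_{T(s)}-a'(s)}$ in Theorem~\ref{combBinomial} and making sure the scalar prefactor is matched to the $\bar 1$-shape product without an overall extra monomial in $q$ and $t$. Since both ingredients are already proved, the argument is essentially a substitution followed by a reindexing, and there should be no genuine analytic difficulty; the care required is in ensuring that the prefactor $\prod_i(1-qt^{n-i})/(1-q^{\lambda_i+1}t^{n-i})$, which sits outside the binomial coefficient in the definition of $C_\lambda$, is correctly absorbed into the first combinatorial product over $\bar 1$ rather than interfering with the tableaux sum.
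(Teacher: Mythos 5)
Your proposal is correct and follows essentially the same route as the paper: the paper likewise writes $[\lambda+1]_{q,t}=\binom{\lambda+1}{\bar 1}_{q,t}$ as a product over the shape $\bar 1$ via the reindexing $i=1+l'(s)$, $a'(s)=0$, and then substitutes $z=2\lambda$, $\mu=\lambda$ into Theorem~\ref{combBinomial} inside the definition~(\ref{defnCat}). The only cosmetic difference is that you start from the already-expanded product form of the prefactor rather than from the binomial-coefficient identity for $[\lambda+1]_{q,t}$, which amounts to the same computation.
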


\begin{proof}
Using Lemma~\ref{factors2}, and setting $x_i= q^{1-k+x_i} t^{n-1}$ in the identity~(\ref{binom_rect2}), we obtain
\begin{equation}
[\lambda+1]_{qt} = \binom{\lambda+1}{\bar 1}_{\!\!\!q,t} = 
\prod_{s\in \bar 1 } \dfrac{ (1- q^{1+\lambda_{1+l'(s)} } q^{a'(s)} t^{n-1-l'(s)} ) }
{ (1- q^{1+a'(s)} t^{n-1-l'(s)} ) }
\end{equation}
We also set $z=2\lambda$ and $\mu=\lambda$ in Theorem~\ref{combBinomial}, and substitute both results into the definition~(\ref{defnCat}) of $C_\lambda$ to write 
\begin{multline}
C_\lambda=\dfrac{1}{[\lambda+1]_{q,t}} \binom{2 \lambda}{\lambda}_{\!\!\!q,t} =
\prod_{s\in \bar 1 } \dfrac{ (1- q^{1+a'(s)} t^{n-1-l'(s)} ) }
{ (1- q^{1+\lambda_{1+l'(s)} } q^{a'(s)} t^{n-1-l'(s)} ) }
\binom{2 \lambda}{\lambda}_{\!\!\!qt} \\
= \prod_{s\in \bar 1 } \dfrac{ (1- q^{1+a'(s)} t^{n-1-l'(s)} ) }
{ (1- q^{1+\lambda_{1+l'(s)} } q^{a'(s)} t^{n-1-l'(s)} ) } 
\prod_{s\in \lambda} \dfrac{ 1}{ (1-q^{a_\lambda(s)+1} t^{l_\lambda(s)} ) } \\  
\sum_{T} \psi_T(q,t) \prod_{s\in\lambda} t^{l'(s)+1 -T(s)}
(1- q^{2\lambda_{T(s)} -a'(s) } t^{l'(s) } ) 
\end{multline}
This is what we wanted to show. 
\end{proof}

The last sequence of combinatorial numbers we study in this paper is the multiple Lah numbers. The classical Lah numbers are defined to be the connection coefficients in the expansion 
\begin{equation}
x^{\overline{n}} = \sum_{k=0}^n L(n,k) \, x_{\underline{k}}
\end{equation}
where $x_{\underline{n}}=x (x-1) \ldots (x-n+1)$ denotes the falling factorial as before, and $x^{\overline{n}}=x (x+1) \ldots (x+n-1)$ the rising factorial. 
Various $q$-analogues of these numbers are developed in one dimensional case in~\cite{GarsiaR1, Wagner2} and others. 

We defined the multiple $qt$-Lah numbers in~\cite{Coskun3} in terms of the multiple factorial function and its flipped version as follows:
\begin{defn}
Let $[\bar x]^\lambda$ denote the multiple analogue of the rising factorial. That is,
\begin{equation}
\label{rising}
[\bar x]^\lambda = [\bar x, q, t]^\lambda := [\bar x, q^{-1}, t^{-1}]_\lambda
\end{equation}
For an $n$ part partition $\lambda$, the $qt$-Lah numbers $L(\lambda,\mu) = L(\lambda,\mu, q, t) $ are defined by 
\begin{equation}
\label{Lahnumber}
[\bar x]^\lambda  
= \sum_{\mu \subseteq \lambda} (-1)^{|\mu|} q^{-|\mu|+  2n(\mu') }  t^{-n(\mu)}    
L(\lambda,\mu, q, t)  \, [\bar x]_\mu 
\end{equation}
where $x\in\mathbb{C}$, and $\bar x=\{x,\ldots, x\}\in\mathbb{C}^n$ as before. 
\end{defn}

Among many interesting properties obtained in~\cite{Coskun3}, we review a fundamental result that, similar to the  one dimensional case, the multiple $qt$-Lah numbers admit an explicit representation. More specifically, we have 
\begin{multline}
\label{closedLahexplicit}
L(\lambda, \mu)  = (-1)^{|\lambda|+|\mu|} q^{-|\lambda|+|\mu|} t^{n(\lambda) - n(\mu)} \\
\cdot \prod_{i=1}^n  \left\{ (1-qt^{n-i} )^{-\lambda_i+\mu_i}  \right\}  
\dfrac{ (t^{2(n-1)})_{\lambda}} {(t^{2(n-1)} )_{\mu} }  \,
\binom{\lambda}{\mu}_{\!\!\!q,t}  \hspace{30pt}
\end{multline}

We conclude this article by providing a combinatorial formula for the multiple Lah numbers in the next result. 
\begin{thm}
Let $\lambda$ and $\mu$ be partitions of length at most $n$. With the notation as above, we have
\begin{multline}
L(\lambda ,\mu) 
= \prod_{s\in \lambda/\mu} \dfrac{t^{l'(s)} (1- q^{a'(s)}t^{2(n-1)-l'(s)} ) }
{ (-q) (1- q t^{n-1-l'(s)}) } 
\prod_{s\in \mu} \dfrac{ 1}{ (1-q^{a_\mu(s)+1} t^{l_\mu(s)} ) }  \\ \cdot 
 \sum_{T} \psi_T(q,t) \prod_{s\in\mu} t^{l'(s)+1 -T(s)}
(1- q^{\lambda_{T(s)} -a'(s) } t^{l'(s) } ) 
\end{multline}
\end{thm}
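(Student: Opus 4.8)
The plan is to derive the combinatorial formula for $L(\lambda,\mu)$ directly from the explicit representation~(\ref{closedLahexplicit}), which already expresses the multiple Lah numbers in terms of the multiple binomial coefficient $\binom{\lambda}{\mu}_{q,t}$. Since Theorem~\ref{combBinomial} provides a combinatorial (tableau sum) formula for $\binom{z}{\mu}_{q,t}$, the strategy is to substitute $z=\lambda$ into that theorem and then reconcile all the prefactors. Thus the first step is to write
\[
\binom{\lambda}{\mu}_{\!\!\!q,t}
= \prod_{s\in \mu} \dfrac{ 1}{ (1-q^{a_\mu(s)+1} t^{l_\mu(s)} ) }
\sum_{T} \psi_T(q,t) \prod_{s\in\mu} t^{l'(s)+1 -T(s)}
(1- q^{\lambda_{T(s)} -a'(s) } t^{l'(s) } )
\]
so that the tableau sum in the target formula is immediately accounted for, together with the factor $\prod_{s\in\mu} (1-q^{a_\mu(s)+1} t^{l_\mu(s)})^{-1}$.

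The second step is to handle the scalar prefactor in~(\ref{closedLahexplicit}), namely
\[
(-1)^{|\lambda|+|\mu|} q^{-|\lambda|+|\mu|} t^{n(\lambda)-n(\mu)}
\prod_{i=1}^n (1-qt^{n-i})^{-\lambda_i+\mu_i}\,
\dfrac{(t^{2(n-1)})_\lambda}{(t^{2(n-1)})_\mu}.
\]
Here I would translate each piece into a product over the skew shape $\lambda/\mu$ using Lemma~\ref{factors1}. The sign and the powers of $q$ and $t$ convert via parts~(c),(d),(e) applied to the differences, giving $\prod_{s\in\lambda/\mu}(-1)$, $\prod_{s\in\lambda/\mu} q^{-1}$, and $\prod_{s\in\lambda/\mu} t^{l'(s)}$; the product $\prod_i (1-qt^{n-i})^{-\lambda_i+\mu_i}$ becomes $\prod_{s\in\lambda/\mu}(1-qt^{n-1-l'(s)})^{-1}$ by part~(f). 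The crucial remaining piece is the ratio of Pochhammer symbols $(t^{2(n-1)})_\lambda / (t^{2(n-1)})_\mu$, which by the definition~(\ref{ellipticQtPocSymbol}) and Lemma~\ref{factors1}(b) equals a product over the skew cells of $\lambda/\mu$. Applying part~(b) with $x_i = t^{2(n-1)}t^{1-i}$ (absorbing the $t^{1-i}$ shift already built into~(\ref{ellipticQtPocSymbol})) should produce exactly $\prod_{s\in\lambda/\mu}(1- q^{a'(s)}t^{2(n-1)-l'(s)})$, matching the numerator of the claimed skew-shape prefactor.

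The third and final step is bookkeeping: assemble the three skew-shape factors $t^{l'(s)}$, $(1-q^{a'(s)}t^{2(n-1)-l'(s)})$, and $(-q)^{-1}(1-qt^{n-1-l'(s)})^{-1}$ into the single product $\prod_{s\in\lambda/\mu} \frac{t^{l'(s)}(1-q^{a'(s)}t^{2(n-1)-l'(s)})}{(-q)(1-qt^{n-1-l'(s)})}$ appearing in the statement, and confirm that nothing from the $\binom{\lambda}{\mu}_{q,t}$ side needs further simplification. I expect the main obstacle to be the correct handling of the Pochhammer ratio $(t^{2(n-1)})_\lambda/(t^{2(n-1)})_\mu$: one must verify that the skew-shape cell labeling induced by Lemma~\ref{factors1}(b) assigns arm-colength $a'(s)$ and leg-colength $l'(s)$ consistently across $\lambda$ and $\mu$ so that the quotient collapses cleanly to a product over $\lambda/\mu$ rather than leaving residual cells. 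Once that reduction is pinned down, the remaining identifications are routine manipulations of the kind already illustrated in the proofs of Theorems~\ref{combBracket} and~\ref{combBinomial}.
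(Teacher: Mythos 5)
Your proposal is correct and follows essentially the same route as the paper: start from the explicit representation~(\ref{closedLahexplicit}), substitute the tableau formula of Theorem~\ref{combBinomial} for $\binom{\lambda}{\mu}_{q,t}$ with $z=\lambda$, and convert the scalar prefactor into skew-shape products via Lemmas~\ref{factors1} and~\ref{factors2}. The point you flag as the main obstacle is in fact harmless, since $a'(s)$ and $l'(s)$ depend only on the coordinates of the cell and not on the ambient partition, so the Pochhammer ratio collapses to a product over $\lambda/\mu$ exactly as you anticipate.
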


\begin{proof}
We use the explicit formula for the multiple Lah numbers~(\ref{closedLahexplicit}) together with  Lemma~\ref{factors1}, Lemma~\ref{factors2}, and Theorem~\ref{combBinomial} to write 
\begin{multline}
L(\lambda ,\mu) 
=(-1)^{|\lambda|+|\mu|} q^{-|\lambda|+|\mu|} t^{n(\lambda) - n(\mu)} 
\prod_{i=1}^n  (1-qt^{n-i} )^{-\lambda_i+\mu_i}   
\dfrac{ (t^{2(n-1)})_{\lambda}} {(t^{2(n-1)} )_{\mu} }  \,
\binom{\lambda}{\mu}_{\!\!\!q,t}  \\
= 
\dfrac{\prod_{s\in \lambda} (1- q^{a'_\lambda(s)}t^{2(n-1)-l'_\lambda(s)} ) 
\prod_{s\in \mu} (1- q t^{n-1-l'_\mu(s)})  }
{\prod_{s\in \mu} (1-  q^{a'_\mu(s)}t^{2(n-1)-l'_\mu(s)} ) 
\prod_{s\in \lambda} (1- q t^{n-1-l'_\lambda(s)})  }  
\prod_{s\in \mu} \dfrac{ 1}{ (1-q^{a_\mu(s)+1} t^{l_\mu(s)} ) } \\
\cdot \sum_{T} \psi_T(q,t) \prod_{s\in\mu} t^{l'(s)+1 -T(s)}
(1- q^{\lambda_{T(s)} -a'(s) } t^{l'(s) } ) 
\end{multline}
which simplifies to the result above.
\end{proof}

\section{Conclusion }
We derived explicit combinatorial formulas for the multiple $qt$-factorial function, multiple $qt$-binomial coefficients, multiple $qt$-Catalan numbers, and multiple $qt$-Lah numbers in terms of semistandard reversed Young tableau in the present paper. 
We will construct combinatorial formulas for other number sequences in a future publication, and investigate other possible combinatorial interpretations.

\end{document}